\documentclass[letterpaper, 10 pt, conference]{ieeeconf}  

\IEEEoverridecommandlockouts                              

\overrideIEEEmargins                                      




\addtolength{\topmargin}{9mm}
\usepackage{multicol}
\usepackage{scalerel}
\usepackage[utf8]{inputenc} 
\usepackage{adjustbox}
\usepackage{graphicx}
\usepackage{mathrsfs}
\usepackage{stmaryrd}
\usepackage{tikz}
\usepackage{amssymb,amsthm}
\usepackage[T1]{fontenc}
\usepackage{url}
\usepackage{ifthen}
\usepackage{cite}
\usepackage[cmex10]{amsmath}
\usepackage{pgfplots}
\usepackage{acronym}
\pgfplotsset{compat=newest}
\usepackage{geometry}
\usepackage{afterpage}

\geometry{
    letterpaper, 
    top=54pt, left=54pt, right=54pt, bottom=54pt, 
}


\DeclareMathOperator*{\Var}{\text{Var}}

\newcommand{\indep}{\perp \!\!\! \perp}
\newcommand{\brackets}[1]{\left(#1\right)}
\newcommand{\sbrackets}[1]{\left[#1\right]}

\newcommand{\mlt}[1]{\textcolor{black}{#1}}

\newtheorem{theorem}{Theorem}[section]
\newtheorem{corollary}[theorem]{Corollary}
\newtheorem{lemma}[theorem]{Lemma}
\newtheorem{proposition}[theorem]{Proposition}
\newtheorem{definition}[theorem]{Definition}
\newtheorem{remark}[theorem]{Remark}

\acrodef{mmse}[MMSE]{{minimum mean-squared error}}
\acrodef{dm}[DM]{{decision maker}}
\acrodef{lqg}[LQG]{Linear-Quadratic-Gaussian}
\acrodef{pdf}[p.d.f.]{probability density function}
\acrodef{dpc}[DPC]{Dirty Paper Coding}

\let\originalproof\proof
\let\originalendproof\endproof


\let\proof\originalproof
\let\endproof\originalendproof

\title{\LARGE \bf
Optimal Gaussian Strategies for Vector-valued Witsenhausen Counterexample with Non-causal State Estimator
}

\author{Mengyuan Zhao$^{1}$, Tobias J. Oechtering$^{1}$ and Maël Le Treust$^{2}$
\thanks{This work was supported by Swedish Research Council (VR) under grant 2020-03884.}
\thanks{$^{1}$M. Zhao and T. J. Oechtering are with the Division of Information Science and Engineering,
        KTH Royal Institute of Technology, 100 44 Stockholm, Sweden
        {\tt\small \{mzhao, oech\}@kth.se }}%
\thanks{$^{2}$M. Le Treust is with CNRS, Inria, IRISA UMR 6074, University of Rennes,
        F-35000 Rennes, France
        {\tt\small mael.le-treust@cnrs.fr}}%
}

\begin{document}

\maketitle
\thispagestyle{empty}
\pagestyle{empty}

\begin{abstract}

In this study, we investigate a vector-valued Witsenhausen model where the second \ac{dm} acquires a vector of observations before selecting a vector of estimations. Here, the first \ac{dm} acts causally whereas the second \ac{dm} estimates non-causally. When the vector length grows, we characterize, via a single-letter expression, the optimal trade-off between the power cost at the first \ac{dm} and the estimation cost at the second \ac{dm}.  In this paper, we show that the best linear scheme is achieved by using the time-sharing method between two affine strategies, which coincides with the convex envelope of the solution of Witsenhausen in 1968. Here also, Witsenhausen's two-point strategy and the scheme of Grover and Sahai in 2010 where both devices operate non-causally, outperform our best linear scheme. Therefore, gains obtained with block-coding schemes are only attainable if all \ac{dm}s operate non-causally.

\end{abstract}

\section{Introduction}
In 1968, Witsenhausen proposed his celebrated counterexample \cite{witsenhausen1968} showing that the optimal control law of the \ac{lqg} problem is not linear when the information pattern is nonclassical.  Nowadays, it still serves as an important toy example in distributed decision-making field \cite{bansal1986stochastic, silva2010control, yuksel2013stochastic, gupta2015existence} and information-theoretic control \cite{martins2005fundamental,
freudenberg2008feedback, 
derpich2012improved, AgrawalDLL15, Akyol2017information, charalambous2017hierarchical, wiese2018secure, 
Stavrou2022sequential}.


Despite the simplicity of Witsenhausen counterexample, finding its globally optimal strategy and optimal cost remains an open problem. To understand the difficulty, we consider a decentralized stochastic decision problem where the first \ac{dm} knows the state of the system perfectly but has a power cost. The second \ac{dm} estimates the interim state from a noisy observation which causes an estimation cost. Thus, the first DM aims to serve two purposes of control, steer the state at low cost as well as enable effective state estimation (dual role of control). Numerous studies over the last decades have aimed to enhance understanding and propose solutions using approaches such as numerical optimization \cite{tseng2017alocal, karlsson2011iterative}, neural networks \cite{baglietto2001numerical}, hierarchical search \cite{lee2001hierarchical}, learning \cite{li2009learning}, and optimal transport \cite{wu2011transport}.

Motivated by the vector-valued extension of the counterexample formulated in \cite{Grover2010Witsenhausen}, advanced coding schemes using methods from information theory provide new insights. For example, \cite{Grover2010Witsenhausen} established a lowerbound where both the \ac{dm}s have non-causal access to the entire sequences of 
 observation employing block-coding. Additionally, \cite{grover2013approximately} obtained a lattice-based optimal solution for finite-length vector case. The vector-valued approaches simplify the characterization of fundamental bounds in the limit of large vector lengths, additionally facilitating performance gains from block-coding through non-causal operations.

However, it remained open whether causal control schemes also could offer any advantage or even surpass the performance of Witsenhausen's two-point strategy. In \cite{Treust2024power}, the optimal Gaussian cost for the Witsenhausen problem is under study where the first \ac{dm} is non-causal and the second \ac{dm} is causal. This optimal Gaussian cost was achieved through a so-called \textit{time-sharing} strategy, which convexifies the linear cost region by utilizing two operational points. In this paper, we flip the causality property to explore the optimal Gaussian cost for the scenario where the first DM acts causally while the second DM acts non-causally. To this end, we employ the theoretical result of the characterization of the achievable Witsenhausen cost region and the information constraints derived in \cite{zhao2024coordination}. Despite differences with the cost region of \cite{Treust2024power}, both frameworks end up to have the same optimal Gaussian cost. Moreover, 
we uncover the remarkable finding that across five vector-valued Witsenhausen problem setups that feature at least one causal controller, there exists an identical optimal Gaussian cost outcome, regardless of the presence of any feedback or feed-forward information \cite{zhao2024causal}. Surprisingly, this optimal vector-valued Gaussian cost featuring causal DMs is again outperformed by Witsenhausen's two-point strategy \cite{witsenhausen1968} and the non-causal strategy by Grover and Sahai \cite{Grover2010Witsenhausen}.

This paper is structured as follows: Section \ref{sec: system model} introduces the model of causal encoding and noncausal decoding. The main result of the optimal Gaussian cost, a lemma   
determining the relation of Gaussian covariance coefficients given Markov chains and a corollary implied by the main result are presented in Section \ref{sec: opt gaussian}. Section \ref{sec: discussions} discusses the numerical results and the theorem's implications which serve as our main contribution. The proofs of the main theorem, supportive lemmas and corollary are shown in Appendix.

\section{System Model}\label{sec: system model}

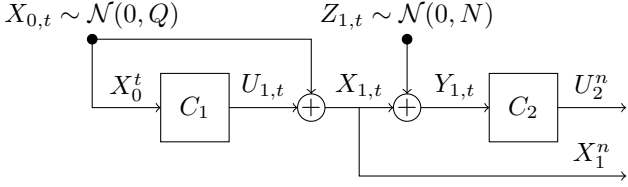
\begin{figure}[t]
  \centering


\begin{tikzpicture}[scale=0.91]
    \draw (2,0) rectangle (3,1);
    \draw (6.8,0) rectangle (7.8,1);

    \draw (4.2,0.5) circle (0.2) node {$+$};
    \draw (5.6,0.5) circle (0.2) node {$+$};

    \filldraw (1,1.5) circle (2pt) node[above] {$X_{0,t}\sim \mathcal{N}(0,Q)$};
    \filldraw (5.6,1.5) circle (2pt) node[above] {$Z_{1,t}\sim \mathcal{N}(0,N)$};

    \draw[->] (1,1.5) -- (1,0.5) -- (2,0.5);
    \draw[->] (1,1.5) -- (4.2,1.5) -- (4.2,0.7);
    \draw[->] (3,0.5) -- (4,0.5);
    \draw[->] (4.4,0.5) -- (5.4,0.5);
    \draw[->] (4.9,0.5) -- (4.9,-0.5) -- (8.8,-0.5);
    \draw[->] (5.6,1.5) -- (5.6,0.7);
    \draw[->] (5.8,0.5) -- (6.8,0.5);
    \draw[->] (7.8,0.5) -- (8.8,0.5);

    \node at (1.5,0.8) {$X_0^t$};
    \node at (3.5,0.8) {$U_{1,t}$};
    \node at (4.9,0.8) {$X_{1,t}$};
    \node at (6.3,0.8) {$Y_{1,t}$};
    \node at (8.3,0.8) {$U_{2}^n$};
    \node at (8.3,-0.2) {$X_{1}^n$};
    \node at (2.5,0.5) {$C_1$};
    \node at (7.3,0.5) {$C_2$};
\end{tikzpicture}
\label{fig:model}

  \label{fig:sim}
  \caption{The i.i.d. state and the channel noise are drawn according to Gaussian distributions $X_0^{n}\sim \mathcal{N}(0,Q\mathbb{I})$ and $Z_1^{n}\sim \mathcal{N}(0,N\mathbb{I})$.}
\end{figure}

In this section, we introduce the setup and recapitulate some  foundational results to our problem. Throughout this paper, capital letters, e.g. $X_{0}$ denote random variables while lowercase letters, e.g. $x_0$ denote realisations. The notation $X_0^n$ denotes a random vector of length $n\in\mathbb N$, $X_{0,t}$ denotes the $t$-th entry of $X_0^n$, and $X_0^t=(X_{0,1},...,X_{0,t})$ represents the segment of $X_0^n$ up to stage $t$, where $t\in\{1,\ldots,n\}$.

Let's consider the vector-valued Witsenhausen counterexample setup with causal source states and channel noises that are drawn independently according to the i.i.d. Gaussian distributions $X_0^n\sim\mathcal{N}(0, Q\mathbb I)$ and $Z_1^n\sim\mathcal{N}(0,N\mathbb I)$, for some $Q,N\in\mathbb R^+$, where $\mathbb I$ is the identity matrix, see Figure \ref{fig:sim}. We denote by $X_1$ the memoryless interim state and $Y_1$ the output of the memoryless additive channel, generated by
\begin{flalign}
    &X_1 = X_0 + U_1  &\text{with }X_0\sim\mathcal{N}(0,Q),\label{X1 generation}\\
    &Y_1 = X_1 + Z_1 = X_0 + U_1 + Z_1 &\text{with }Z_1\sim\mathcal{N}(0,N).\label{Y1 generation}
\end{flalign}
We denote by $\mathcal{P}_{X_0} = \mathcal{N}(0,Q)$ the generative Gaussian probability distribution of the state, and by $\mathcal{P}_{X_1,Y_1|X_0,U_1}$ the channel probability distribution according to \eqref{X1 generation} and \eqref{Y1 generation}.

\begin{definition}
    For $n\in\mathbb{N}$, a ``control design'' with causal encoder and noncausal decoder is a tuple of conditional distributions $c = (\{ f_{U_{1,t}|X_0^t}^{(t)}\}_{t=1}^n, g_{U_2^n|Y_1^n})$, where at time instant $t\in\{1,...,n\}$, $f_{U_{1,t}|X_0^t}^{(t)}$ selects a channel input $U_{1,t}$ based on the past source sequence $X_0^t$ up to $t$, while $g_{U_2^n|Y_1^n}$ selects the whole estimation sequence $U_2^n$ based on the whole channel output sequence $Y_1^n$.      
    This induces a distribution over sequences of symbols:
    \begin{equation}
    \prod_{t=1}^n \mathcal{P}_{X_{0,t}}\times \prod_{t=1}^n \mlt{f_{U_{1,t}|X_0^t}^{(t)}}\times\prod_{t=1}^n \mathcal{P}_{X_{1,t},Y_{1,t}|X_{0,t},U_{1,t}} \times \mlt{g_{U_2^n|Y_1^n}}, \nonumber
    \end{equation}
We denote by $\mathcal{C}_e(n)$ the set of control designs with causal encoder and non-causal decoder.
\end{definition} 

We evaluate the power cost and the estimation cost by considering their respective average over the sequences of symbols.
\begin{definition}
    We define the two $n$-stage cost functions $c_P(u_1^n) = \frac{1}{n}\sum_{t=1}^n (u_{1,t})^2$ and $c_S(x_1^n, u_2^n) = \frac{1}{n}\sum_{t=1}^n(x_{1,t}-u_{2,t})^2$. 
    The pair of costs $(P,S)\in\mathbb{R}^2$ is achievable if for all $\varepsilon>0$, there exists $\Bar{n}\in\mathbb N$ such that for all $n\geq \Bar{n}$, there exists a control design $c\in \mathcal{C}_e(n)$ such that 
    \begin{equation}
        \mathbb E\Big[\big|P - c_P(U_1^n)\big| + \big|S - c_S(X_1^n, U_2^n)\big|\Big] \leq \varepsilon.\nonumber
    \end{equation}
\end{definition}
 The optimal achievable pairs of costs $(P,S)\in\mathbb R^2$, which we refer to as the Witsenhausen cost, are characterized in the following theorem.

\begin{theorem}[\!\!\!\protect{\cite[Theorem I]{zhao2024coordination}}]
    The pair of Witsenhausen costs $(P,S)$ is achievable if and only if there exists a joint distribution over the random variables $(X_0, W_1, W_2, U_1, X_1, Y_1, U_2)$ that decomposes according to
    \begin{flalign}        \mathcal{P}_{X_0}\mathcal{P}_{W_1}\mathcal{P}_{W_2|X_0,W_1}\mathcal{P}_{U_1|X_0,W_1}\mathcal{P}_{X_1, Y_1|X_0, U_1}\mathcal{P}_{U_2|W_1, W_2, Y_1},\label{eq: prob result}
    \end{flalign}
    such that
    \begin{align}
        &I(W_1, W_2; Y_1) - I(W_2; X_0 | W_1) \geq 0,\label{eq: info result}\\
        &P = \mathbb{E}\sbrackets{U_1^2}, \quad \quad S = \mathbb{E}\sbrackets{(X_1 - U_2)^2},\label{eq: cost result}
    \end{align}
    where $\mathcal{P}_{X_0}$ and $\mathcal{P}_{X_1, Y_1|X_0, U_1}$ are the given Gaussian distributions,  $W_1,W_2$ are two auxiliary random variables, and where the mutual information $I(W_1, W_2; Y_1)$ is the Kullback-Leibler divergence between the joint distribution $\mathcal{P}_{W_1, W_2, Y_1}$ and the product of the marginal distributions $\mathcal{P}_{W_1, W_2} \mathcal{P}_{Y_1}$.\label{wits main theorem}
\end{theorem}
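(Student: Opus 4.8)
The statement is an achievability-and-converse (coordination) result, so the plan is to prove the two implications separately, treating it as an empirical-coordination problem in which both quadratic costs are averages of single-symbol functions of the realized sequences. Since $c_P(u_1^n)$ and $c_S(x_1^n,u_2^n)$ depend only on the joint empirical distribution of the symbols, once that empirical law concentrates around a target single-letter distribution of the form \eqref{eq: prob result}, the averaged costs concentrate around $\mathbb{E}[U_1^2]$ and $\mathbb{E}[(X_1-U_2)^2]$. Because these are unbounded quadratics rather than bounded functions, I would control the contribution of large-norm and atypical sequences by a truncation combined with a uniform second-moment bound, so that the expected absolute cost deviation demanded in the definition of achievability vanishes as $n\to\infty$.

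For the ``if'' direction (achievability) I would start from a fixed single-letter law satisfying \eqref{eq: info result} and realize it with a block-Markov scheme, which is the device that lets a causal encoder emulate the non-causal (Gelfand--Pinsker-type) structure of the constraint. Observe first that, since $W_1\indep X_0$, one has $I(W_2;X_0\mid W_1)=I(W_1,W_2;X_0)$, so \eqref{eq: info result} reads $I(W_1,W_2;Y_1)\geq I(W_1,W_2;X_0)$. In block $b$ the encoder, having already observed the entire state of block $b-1$, forms a Wyner--Ziv description $W_2^n$ of that block and transmits its bin index over block $b$ through a Shannon-strategy channel code; the auxiliary $W_1^n$ carries this index and, depending on block $b-1$ alone, is independent of the current state, so the memoryless kernel $\mathcal{P}_{U_1\mid X_0,W_1}$ yields a channel input $U_{1,t}$ depending only on $(X_{0,t},W_{1,t})$ and thus respecting causality. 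The non-causal decoder recovers the index, Wyner--Ziv-decodes $W_2^n$, and forms $U_2^n$ symbol by symbol through $\mathcal{P}_{U_2\mid W_1,W_2,Y_1}$. The channel rate available for the index, $I(W_1;Y_1)$, must cover the conditional binning rate $I(W_2;X_0\mid W_1)-I(W_2;Y_1\mid W_1)$; rearranging, this is exactly \eqref{eq: info result}, and standard soft-covering and joint-typicality (packing) lemmas then give vanishing error with the realized tuple jointly typical with the target law, the one-block deferral costing an asymptotically negligible rate.

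For the ``only if'' direction (converse) I would fix any $c\in\mathcal{C}_e(n)$ achieving $(P,S)$ and single-letterize. The delicate step is the identification of the auxiliaries: I expect $W_{1,t}$ to be assembled from a uniform time-sharing index together with the exogenous randomness that is independent of the current state $X_{0,t}$, and $W_{2,t}$ from the state-dependent information carried by the remaining output coordinates, chosen so that the causal constraint $\{f^{(t)}_{U_{1,t}\mid X_0^t}\}$ forces the factorization \eqref{eq: prob result}, in particular $W_1\indep X_0$ together with the Markov chains $W_2 - (X_0,W_1) - U_1$ and $(X_0,U_1,X_1) - (W_1,W_2,Y_1) - U_2$. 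The inequality \eqref{eq: info result} I would obtain by lower bounding the information the decoder must extract about the state by $I(W_2;X_0\mid W_1)$ and upper bounding what the state-dependent channel can deliver by $I(W_1,W_2;Y_1)$, then normalizing by $n$ and passing to the limit with a Cesàro-mean and convexity argument to extract a single letter.

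I expect the principal obstacle to be the reconciliation, in the direct part, of the causal channel input with a constraint that has the form of a non-causal condition: because $U_1^n$ may depend on the state only through the current symbol and the block-Markov auxiliary, the encoder cannot describe the current block's state within that same block, and the argument must defer each description by one block and show that both the resulting rate loss and the joint-typicality error vanish. The mirror difficulty in the converse is to choose the auxiliaries so that they simultaneously satisfy every prescribed Markov chain and deliver the full inequality \eqref{eq: info result} rather than a weaker relaxation; the unbounded quadratic costs add the routine but necessary burden of justifying the limiting single-letter expectations, which I would dispatch with the same truncation used in the direct part.
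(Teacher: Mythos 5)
This theorem is not proved in the paper at all: it is imported verbatim from \cite[Theorem I]{zhao2024coordination}, and the appendix of the present paper only contains proofs of Theorem \ref{thm: best gaussian policy}, Lemma \ref{lemma: markov-covariance}, and Corollary \ref{cor: causal cost with feedback}. So there is no in-paper proof to compare your attempt against, and I can only judge your proposal on its own merits relative to the standard coordination-theoretic argument that the cited reference carries out.

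On those merits, your achievability outline is the right one and its rate accounting is exactly consistent with the claim: covering the previous block's state with $W_2$ jointly with $(X_0,W_1)$ costs rate $I(W_2;X_0,W_1)$, binning against the decoder's side information $(W_1,Y_1)$ for that block reduces the transmitted index to $I(W_2;X_0|W_1)-I(W_2;Y_1|W_1)$ (using the chain from $W_2$ to $Y_1$ through $(X_0,W_1)$), and the Shannon-strategy code delivers $I(W_1;Y_1)$ per block, so decodability is precisely \eqref{eq: info result}; the causality of the encoder, the one-block deferral, the Markov-lemma step, and the truncation needed for the unbounded quadratic costs are all correctly flagged. The genuine gap is the converse, which in your write-up is a statement of intent rather than an argument: you never exhibit the auxiliary random variables, and that identification is the crux. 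A workable choice is $W_{1,t}=(T,X_0^{T-1},Y_1^{T-1})$ and $W_{2,t}=Y_{1,T+1}^{n}$ with $T$ a uniform time-sharing index, which makes $W_{1,t}$ independent of $X_{0,t}$ (i.i.d.\ state), gives the decoder chain because $(W_{1,t},W_{2,t},Y_{1,t})$ contains all of $Y_1^n$, and preserves the chain from $U_{1,t}$ to $W_{2,t}$ given $(X_{0,t},W_{1,t})$ only because the past states $X_0^{t-1}$ are included in $W_{1,t}$; your vaguer description (``exogenous randomness independent of the current state'' plus ``remaining output coordinates'') would, e.g., with $W_{1,t}=Y_1^{t-1}$ alone, break that chain, since $U_{1,t}$ and $Y_{1,t+1}^n$ both depend on the then-unconditioned $X_0^{t-1}$. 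Moreover, the inequality \eqref{eq: info result} for these auxiliaries is not a generic ``decoder must extract at least / channel can deliver at most'' bound; it requires an explicit Csisz\'ar-sum-identity computation showing $\sum_{t}\bigl[I(W_{1,t},W_{2,t};Y_{1,t})-I(W_{2,t};X_{0,t}|W_{1,t})\bigr]\geq 0$ up to vanishing terms. Without naming the auxiliaries, verifying the factorization \eqref{eq: prob result} for them, and carrying out that identity, the ``only if'' direction remains unproved.
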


Condition \eqref{eq: info result} characterizes the feasibility set of the Witsenhausen costs. The auxiliary random variables $W_1,W_2$ are introduced specifically for forming the single-letter solutions \eqref{eq: prob result} and \eqref{eq: info result}. They have operational meanings: $W_1$ is used for codewords adapted to the channel assisting a reliable communication, and $W_2$ is used for a description of the compressed source states. Therefore, the dual role of control is explicitly captured by the two auxiliary random variables. Since the region characterized above is optimal, $W_1,W_2$ also provide all freedom for the optimization process.

\begin{remark}
    The following Markov chains follow from the joint probability distribution \eqref{eq: prob result}:
    \begin{align}
        \left\{  
        \begin{aligned}
            &X_0\text{ is independent of }W_1,\\
            & U_1 -\!\!\!\!\minuso\!\!\!\!- (X_0, W_1) -\!\!\!\!\minuso\!\!\!\!- W_2,\\
            &(X_1, Y_1)-\!\!\!\!\minuso\!\!\!\!- (X_0, U_1)    -\!\!\!\!\minuso\!\!\!\!- (W_1, W_2),\\
            & U_2 -\!\!\!\!\minuso\!\!\!\!- (W_1, W_2, Y_1) -\!\!\!\!\minuso\!\!\!\!- (X_0, U_1, X_1).
        \end{aligned}
        \right.
        \label{markov result}
    \end{align}
\end{remark}
The first two Markov chains are consequences of causal encoding. The third Markov chain is related to the processing order of the Gaussian channel. The last Markov chain comes from the non-causal decoding and the symbol-wise reconstruction. These Markov chains play a crucial role in the proof of the main theorem.

\section{Optimal Gaussian Cost}\label{sec: opt gaussian}

In the following, we fix a power cost $P\geq 0$ and investigate the optimal estimation cost at the decoder obtained from using Gaussian random variables.

\begin{definition}
    Given a power cost parameter $P\geq 0$, we define the estimation cost obtained by jointly Gaussian random variables to be
        \begin{align}
            &S_{\mathsf {G}}(P) = \inf_{\mathcal{P}\in\mathbb P(P)}\mathbb E\Big[\big(X_1 - U_2\big)^2\Big],\label{eq: opt gauss prob}\\
            &\mathbb P(P) = \Bigl\{ (\mathcal{P}_{W_1},\mathcal{P}_{W_2|X_0 ,W_1},\mathcal{P}_{U_1|X_0 ,W_1}, \mathcal{P}_{U_2|W_1 ,W_2 ,Y_1}),\nonumber\\
            &\quad \text{ s.t. }P=\mathbb{E}\big[U_1^2\big],\; I(W_1,W_2;Y_1) - I(W_2;X_0|W_1)\geq0,\nonumber\\
            &\quad X_0,W_1,W_2,U_1,X_1,Y_1,U_2\text{ are jointly Gaussian}\Bigr\}.
            \label{eq: opt domain w/o feedback}
        \end{align}
The set $\mathbb P(P)$ denotes the optimization domain.
    
\end{definition}
Note that the \ac{mmse} estimation for the decoder is given by the conditional expectation. We have the following proposition.
\begin{proposition}
    Given a power cost parameter $P\geq 0$, the estimation cost $S_{\mathsf {G}}(P)$ satisfies   
        \begin{align}
            &S_{\mathsf {G}}(P) = \inf_{\mathcal{P}\in\mathbb P_{\mathsf {G}}(P)}\mathbb E\Big[\big(X_1 - \mathbb E\big[X_1\big|W_1,W_2,Y_1\big]\big)^2\Big],\label{eq: opt prob}\\
            &\mathbb P_{\mathsf {G}}(P) = \Bigl\{  (\mathcal{P}_{W_1},\mathcal{P}_{W_2|X_0, W_1},\mathcal{P}_{U_1|X_0, W_1})\quad \text{s.t. }P = \mathbb E\big[U_1^2\big],\nonumber\\
            &\quad I(W_1,W_2;Y_1) - I(W_2;X_0|W_1)\geq0,\text{ and }\\
            &\quad X_0,W_1,W_2,U_1,X_1,Y_1,U_2\text{ are jointly Gaussian}\Bigr\}.\nonumber
        \end{align}
\end{proposition}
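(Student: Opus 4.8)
The plan is to recognize this proposition as a pointwise application of the orthogonality principle for minimum mean-squared error estimation over the optimization domain. The only structural difference between $\mathbb{P}(P)$ and $\mathbb{P}_{\mathsf{G}}(P)$ is that the latter drops the free decoder distribution $\mathcal{P}_{U_2|W_1,W_2,Y_1}$ and instead pins the estimate to the conditional mean $\mathbb{E}[X_1|W_1,W_2,Y_1]$. I would therefore establish the claimed identity \eqref{eq: opt prob} by proving two inequalities between the two infima.

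For the lower bound, I would fix an arbitrary feasible tuple in $\mathbb{P}(P)$, including some estimator realized by $\mathcal{P}_{U_2|W_1,W_2,Y_1}$. Since the power constraint $P=\mathbb{E}[U_1^2]$ and the information constraint $I(W_1,W_2;Y_1)-I(W_2;X_0|W_1)\geq 0$ depend only on the marginals $(\mathcal{P}_{W_1},\mathcal{P}_{W_2|X_0,W_1},\mathcal{P}_{U_1|X_0,W_1})$ together with the fixed source law $\mathcal{P}_{X_0}$ and channel law $\mathcal{P}_{X_1,Y_1|X_0,U_1}$, the joint law of $(X_1,W_1,W_2,Y_1)$, and hence the conditional mean $\mathbb{E}[X_1|W_1,W_2,Y_1]$, is already determined by those three marginals alone. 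The orthogonality principle then gives $\mathbb{E}[(X_1-U_2)^2]\geq \mathbb{E}[(X_1-\mathbb{E}[X_1|W_1,W_2,Y_1])^2]$, and the three retained marginals constitute a feasible point of $\mathbb{P}_{\mathsf{G}}(P)$. Taking the infimum shows that $S_{\mathsf{G}}(P)$ is at least the right-hand side of \eqref{eq: opt prob}.

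For the reverse inequality, I would start from any tuple in $\mathbb{P}_{\mathsf{G}}(P)$ and explicitly adopt the estimator $U_2=\mathbb{E}[X_1|W_1,W_2,Y_1]$. The crucial observation is that for jointly Gaussian variables this conditional expectation is an affine function of $(W_1,W_2,Y_1)$; consequently $U_2$ is jointly Gaussian with $(X_0,W_1,W_2,U_1,X_1,Y_1)$, so the enlarged tuple with this induced $\mathcal{P}_{U_2|W_1,W_2,Y_1}$ lies in $\mathbb{P}(P)$ and attains cost exactly $\mathbb{E}[(X_1-\mathbb{E}[X_1|W_1,W_2,Y_1])^2]$. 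This yields the opposite inequality, and combining the two gives the stated equality.

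The step I expect to require the most care is verifying that passing to the conditional-mean estimator keeps the whole tuple inside the jointly Gaussian family: this is precisely where the linearity of the Gaussian conditional expectation is essential, and it is also what makes the two optimization domains genuinely equivalent rather than merely nested. Everything else reduces to the observation that neither the power constraint nor the information constraint involves $U_2$, so feasibility is preserved when an arbitrary estimator is replaced by the conditional mean.
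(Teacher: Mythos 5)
Your proposal is correct and matches the paper's reasoning: the paper treats this proposition as an immediate consequence of the fact that the MMSE estimator is the conditional expectation, which is exactly your orthogonality-principle direction, and your converse direction (the Gaussian conditional mean is affine in $(W_1,W_2,Y_1)$, so replacing $U_2$ by it preserves joint Gaussianity and feasibility, since neither the power nor the information constraint involves $U_2$) is the same observation the paper relies on implicitly. You have simply written out in full the two-inequality argument the paper leaves to the reader.
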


The best linear strategy below in our setting is again the same as the one in Witsenhausen's paper \cite{witsenhausen1968} following the same arguments.

\begin{lemma}[\!\!\protect{\cite[Lemma 11]{witsenhausen1968}}, \protect{\cite[Lemma 5]{Treust2024power}}]
    The best linear policy is $U_1=-\sqrt{\frac{P}{Q}}X_0$, if $P\leq Q$, otherwise $U_1=-X_0 + \sqrt{P-Q}$, which induces the estimation cost
    \begin{align}
        S_{\ell}(P)=\begin{cases}\frac{(\sqrt{Q}-\sqrt{P})^{2}\cdot N}{(\sqrt{Q}-\sqrt{P})^{2}+N} & \text{ if } P\in[0,\ Q],\\ 0 &\text{ otherwise } .\end{cases}\label{eq: opt linear cost}
    \end{align}
\end{lemma}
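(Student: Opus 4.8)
The plan is to restrict attention to affine, memoryless (symbol-by-symbol) strategies and reduce the claim to a one-parameter optimization. First I would parametrize the encoder as $U_1 = aX_0 + b$ with $a,b\in\mathbb{R}$, so that $X_1=(1+a)X_0+b$ and $Y_1=(1+a)X_0+b+Z_1$ are jointly Gaussian and the power cost is $P=\mathbb{E}[U_1^2]=a^2Q+b^2$. I would then argue that such a scheme is admissible within the framework of Theorem~\ref{wits main theorem}: taking $W_1,W_2$ to be constant makes the information constraint \eqref{eq: info result} read $0\geq 0$, and the decoder is allowed to apply the symbol-wise MMSE estimate $U_2=\mathbb{E}[X_1\mid Y_1]$, which depends on $Y_1$ alone.

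Next I would compute the estimation cost induced by this policy. Since $(X_1,Y_1)$ is jointly Gaussian, the MMSE estimate is linear and the optimal cost equals the conditional variance,
\[
S=\mathrm{Var}(X_1)-\frac{\mathrm{Cov}(X_1,Y_1)^2}{\mathrm{Var}(Y_1)}=\frac{(1+a)^2QN}{(1+a)^2Q+N}.
\]
The key structural observation is that this expression depends on the encoder only through the scalar $(1+a)^2$, that it is strictly increasing in $(1+a)^2$, and that it is completely independent of the bias $b$.

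Finally I would carry out the optimization. Because $S$ increases with $(1+a)^2$ while $b$ only consumes power, the optimal policy drives $(1+a)^2$ as small as the budget permits, i.e. pushes $a$ toward $-1$ inside the feasible range $a^2Q\leq P$, equivalently $a\in[-\sqrt{P/Q},\sqrt{P/Q}]$. When $P\leq Q$ the value $a=-1$ is infeasible, so the minimizer is the boundary point $a=-\sqrt{P/Q}$ with all the power spent on the slope ($b=0$); substituting $(1+a)^2Q=(\sqrt{Q}-\sqrt{P})^2$ produces the first branch of \eqref{eq: opt linear cost}. When $P>Q$ the value $a=-1$ becomes feasible, which makes $X_1=b$ deterministic and hence $S=0$; the residual power then forces $b^2=P-Q$, giving $U_1=-X_0+\sqrt{P-Q}$ and the second branch.

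The hard part here is conceptual rather than computational: one must justify that restricting to affine symbol-wise policies with trivial auxiliaries is legitimate inside the region of Theorem~\ref{wits main theorem}, i.e. that this degenerate choice indeed satisfies \eqref{eq: info result}, and one must recognize the dual role of the bias $b$ --- free for the estimation cost but costly for the power --- which is exactly what creates the regime change at $P=Q$. Everything else collapses to a single Gaussian MMSE computation, and this is precisely why the best affine cost coincides with Witsenhausen's Lemma~11 independently of the causality pattern: the estimation error of any affine scheme is governed by the same scalar $(1+a)^2Q$.
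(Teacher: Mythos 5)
Your proposal is correct and is essentially the standard argument: parametrize the affine policy as $U_1=aX_0+b$, note that the Gaussian MMSE $\frac{(1+a)^2QN}{(1+a)^2Q+N}$ depends only on $(1+a)^2$ and not on the bias $b$, and minimize $(1+a)^2$ subject to the power budget $a^2Q+b^2=P$, which yields the two regimes of \eqref{eq: opt linear cost}. Note that the paper does not prove this lemma itself --- it imports it from Witsenhausen's Lemma 11 and from \cite[Lemma 5]{Treust2024power} ``following the same arguments'' --- and your derivation is precisely the argument behind those citations, so there is nothing to flag beyond the trivial boundary case $P=Q$, where $a=-1$ is in fact feasible and both branches of \eqref{eq: opt linear cost} agree at the value $0$.
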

The best linear cost provides an upper bound for the optimal Gaussian cost. However, the function $P\mapsto S_{\ell}(P)$ is not always convex. The best Gaussian estimation cost derived in Theorem \ref{thm: best gaussian policy} below obtains the convex envelope of $S_{\ell}$, which coincides with the solution proposed by Witsenhausen in \cite[Lemma 12]{witsenhausen1968} and the solution for the problem of flipped causality \cite[Theorem 2]{Treust2024power}. 

The proofs of Theorem \ref{thm: best gaussian policy}, Lemma \ref{lemma: markov-covariance} and Corollary \ref{cor: causal cost with feedback} stated in the following are shown in the appendix.

\begin{theorem}[Main Result]\label{thm: best gaussian policy}
    The optimal Gaussian estimation cost for Witsenhausen problem with causal encoder and non-causal decoder is given by 
    \begin{flalign} S_{\mathsf {G}}(P) =& \begin{cases} \frac {N\cdot (Q-N-P)}{Q} & \text {if } Q> 4N \text { and } P\in [P_{1},P_{2}],\\ S_{\ell }(P) & \text {otherwise. } \end{cases} \label{opt gaussian cost}\end{flalign}
    where the parameters
    \begin{align}
        P_1 &= \frac{1}{2}(Q-2N-\sqrt{Q^2-4QN}),\label{eq: p1}\\
        P_2 &= \frac{1}{2}(Q-2N+\sqrt{Q^2-4QN}).\label{eq: p2}
    \end{align}
\end{theorem}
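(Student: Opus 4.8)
The plan is to solve the minimization defining $S_{\mathsf{G}}(P)$ explicitly and to show that its value is the convex envelope of the best linear cost $S_{\ell}$. First I would invoke the Proposition to replace the estimator $U_2$ by the conditional expectation $\mathbb{E}[X_1 \mid W_1, W_2, Y_1]$, so that the objective becomes the conditional variance $\mathrm{Var}(X_1 \mid W_1, W_2, Y_1)$ and the decoder kernel drops out of the domain $\mathbb{P}_{\mathsf{G}}(P)$. Since all the variables are jointly zero-mean Gaussian, the whole law is fixed by the covariance matrix of $(X_0, W_1, W_2, U_1, X_1, Y_1)$. I would normalize $W_1,W_2$ to unit variance, use $X_0$ independent of $W_1$ to zero the corresponding entries, and substitute $X_1 = X_0 + U_1$ and $Y_1 = X_1 + Z_1$ with $Z_1\sim\mathcal{N}(0,N)$ independent. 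Lemma~\ref{lemma: markov-covariance} then expresses the cross-covariances forced by the Markov chains in \eqref{markov result} as products of the remaining correlation coefficients, cutting the problem down to a handful of scalar parameters; I would also argue that scalar auxiliaries are without loss for scalar estimation.

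With this parametrization the next step is to write both objective and constraint in closed form. The conditional variance $\mathrm{Var}(X_1 \mid W_1, W_2, Y_1)$ is a Schur complement of the covariance matrix, hence a rational function of the parameters, and the power constraint $P = \mathbb{E}[U_1^2]$ is a single quadratic equation. For the information constraint I would use the Gaussian identities $I(W_1,W_2;Y_1)=\tfrac{1}{2}\log\frac{\mathrm{Var}(Y_1)}{\mathrm{Var}(Y_1\mid W_1,W_2)}$ and, since $\mathrm{Var}(X_0\mid W_1)=Q$, $I(W_2;X_0\mid W_1)=\tfrac{1}{2}\log\frac{Q}{\mathrm{Var}(X_0\mid W_1,W_2)}$, so that \eqref{eq: info result} becomes the polynomial inequality $\mathrm{Var}(Y_1)\,\mathrm{Var}(X_0\mid W_1,W_2)\ge Q\,\mathrm{Var}(Y_1\mid W_1,W_2)$ in the parameters.

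I would then solve the resulting constrained minimization and identify two branches. Turning the auxiliaries off leaves the decoder with $Y_1$ alone, and optimizing $U_1$ under the power budget recovers the best linear cost $S_{\ell}(P)$ from the best-linear-policy lemma; this supplies the ``otherwise'' branch and an achievable upper bound everywhere. The remaining task is to show that when the auxiliaries help, i.e.\ when the information constraint is active, the minimizer lies on the affine function $L(P):=\frac{N(Q-N-P)}{Q}$. A Lagrangian/KKT analysis imposing equality in the constraint and eliminating the auxiliary parameters reduces the objective to this affine function of slope $-N/Q$. Matching $L$ to $S_{\ell}$ at the regime boundaries is a common-tangent condition: setting $L(P)=S_{\ell}(P)$ and clearing denominators yields, after the substitution $s=\sqrt{P}$, the perfect square $(s^2-\sqrt{Q}\,s+N)^2=0$, equivalently the quadratic $P^2-(Q-2N)P+N^2=0$. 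Its roots are exactly $P_1$ and $P_2$ of \eqref{eq: p1}--\eqref{eq: p2}, and its discriminant $Q(Q-4N)$ is positive precisely when $Q>4N$; because each root is double, $L$ is tangent to $S_{\ell}$ at $P_1$ and $P_2$ without crossing it, so $L$ is the convex envelope of $S_{\ell}$ on $[P_1,P_2]$ and the pointwise minimum of the two branches is \eqref{opt gaussian cost}.

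The \emph{main obstacle} is the constrained optimization in the third step, namely deriving the affine branch $L(P)$ from the problem rather than merely verifying the tangency afterward. This requires eliminating the auxiliary correlation parameters cleanly along the active-constraint manifold and checking that the minimized Schur complement collapses to the single affine expression $\frac{N(Q-N-P)}{Q}$ instead of a more complicated rational function, which is where Lemma~\ref{lemma: markov-covariance} is essential. The accompanying converse, that no jointly Gaussian distribution beats the convex envelope, amounts to showing that the affine lower branch is tangent to and nowhere below the genuinely achievable costs, so that the pointwise minimum is indeed optimal.
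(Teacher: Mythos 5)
Your plan follows the same architecture as the paper's proof---reduce to the conditional-variance objective via the Proposition, parametrize the jointly Gaussian law by correlation coefficients, use the independence of $X_0$ and $W_1$ together with Lemma~\ref{lemma: markov-covariance} to pin down the cross-covariances, and express the objective as a Schur complement and the information constraint through Gaussian mutual-information formulas---but it stops exactly where the content of the theorem begins. The constrained minimization that you defer to ``a Lagrangian/KKT analysis'' and yourself flag as the main obstacle \emph{is} the proof: nothing in the proposal derives the affine branch $N(Q-N-P)/Q$, the threshold $Q>4N$, or the interval $[P_1,P_2]$ from the optimization itself. In the paper this is carried out concretely: the cost takes the form $Nf/(N+f)$ for a rational function $f$ of $(\rho_2,\rho_3,\rho_4,\rho_5)$; since $x\mapsto Nx/(N+x)$ is increasing on $(-\infty,-N]\cup[0,\infty)$, one minimizes $f$ instead; positive semidefiniteness of the covariance matrices and the sign dichotomy of the logarithmic constraint split the analysis into two cases according to the sign of $1-\rho_2^2-\rho_4^2$; in the case $\rho_2^2+\rho_4^2\le 1$, monotonicity of $f$ in $\rho_5^2$ and then in $\rho_2^2$ saturates two constraints, leaving a one-variable problem in $\rho_3$ whose interior stationary point $\rho_3^*=-(P+N)/\sqrt{QP}$ is feasible precisely when $(P+N)^2\le QP$, i.e.\ $Q>4N$ and $P\in[P_1,P_2]$, yielding $N(Q-N-P)/Q$, while otherwise the boundary point $\rho_3^*=-1$ yields $S_{\ell}(P)$; and in the opposite case ($\rho_2^2+\rho_4^2\ge 1$, where $f\le -N$) the cost is bounded below by $N$ and hence never optimal. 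That last regime is part of the converse and is absent from your proposal altogether.

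Your common-tangent computation at the end is correct---$L(P)=S_{\ell}(P)$ does reduce to $\bigl((P+N)-\sqrt{QP}\bigr)^2=0$, i.e.\ $P^2-(Q-2N)P+N^2=0$ with roots $P_1,P_2$ and discriminant $Q(Q-4N)$---and it is a meaningful consistency check, since $(P+N)^2=QP$ is exactly the feasibility boundary $(\rho_3^*)^2=1$ in the paper's optimization. But tangency of a \emph{guessed} affine function to $S_{\ell}$ establishes neither achievability (that some admissible jointly Gaussian $(W_1,W_2,U_1)$ attains $L(P)$ for $P\in[P_1,P_2]$, which in the paper requires exhibiting the optimizers $\rho_5^*$, $\rho_2^*$, $\rho_3^*$ with the information constraint met with equality) nor the converse (that no admissible Gaussian law goes below the envelope). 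Note also that no convexification mechanism is available inside the definition of $S_{\mathsf G}(P)$: the domain is a single-letter family of Gaussian laws, not closed under time-sharing, so ``the optimum equals the convex envelope of $S_{\ell}$'' is a conclusion of the computation, not a principle you may invoke to finish.
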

In this theorem, $P_1$ and $P_2$ given in \eqref{eq: p1} and \eqref{eq: p2} are the two operating points for conducting the time-sharing strategy, which enables us to achieve a cost gain from the affine policy.

The following lemma states the general relation of Gaussian covariance coefficients given a Markov chain. It is a direct consequence combining several well-known results.


\begin{lemma}\label{lemma: markov-covariance}
    If the jointly Gaussian random vector $(X,Y,Z)$ satisfy the Markov chain $X -\!\!\!\!\minuso\!\!\!\!- Y -\!\!\!\!\minuso\!\!\!\!- Z$ and have a covariance matrix
    \begin{align}
        \Sigma_{X,Y,Z} = \begin{pmatrix}
            P & \rho_1\sqrt{PQ} & \rho_2\sqrt{PV}\\
            \rho_1\sqrt{PQ} & Q &\rho_3\sqrt{QV}\\
            \rho_2\sqrt{PV}&\rho_3\sqrt{QV} & V
        \end{pmatrix},\label{eq: cov xyz}
    \end{align}
    with the covariance coefficients $(\rho_1,\rho_2,\rho_3)\in[-1,1]^3$ ensuring that $\det\brackets{\Sigma_{X,Y,Z}}\geq 0$, then, we have
    \begin{align}
        \rho_2=\rho_1\rho_3.\label{eq: rho_2=rho_1rho_3}
    \end{align}
    
\end{lemma}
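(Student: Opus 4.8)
The plan is to translate the Markov property into a statement about the conditional cross-covariance and then read off the claimed identity from a single Gaussian conditioning computation. The starting point is the standard fact, which is the ``well-known result'' the lemma alludes to, that for a jointly Gaussian triple the Markov chain $X -\!\!\!\!\minuso\!\!\!\!- Y -\!\!\!\!\minuso\!\!\!\!- Z$ is equivalent to the conditional independence of $X$ and $Z$ given $Y$, and that, for Gaussian vectors, this conditional independence is in turn equivalent to the vanishing of the conditional covariance $\Cov(X,Z\,|\,Y)=0$. First I would state this chain of equivalences explicitly and cite it, so that the remainder of the argument reduces to evaluating a single scalar.

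Next I would compute $\Cov(X,Z\,|\,Y)$ directly from the covariance matrix \eqref{eq: cov xyz}. Using the Schur-complement expression for the conditional covariance of a jointly Gaussian vector, the cross term between $X$ and $Z$ given $Y$ equals $\Cov(X,Z) - \Cov(X,Y)\,\Var(Y)^{-1}\,\Cov(Y,Z)$. Substituting the entries from \eqref{eq: cov xyz} yields
\begin{align}
    \Cov(X,Z\,|\,Y) = \rho_2\sqrt{PV} - \frac{\brackets{\rho_1\sqrt{PQ}}\brackets{\rho_3\sqrt{QV}}}{Q} = \brackets{\rho_2-\rho_1\rho_3}\sqrt{PV}. \nonumber
\end{align}
The final step is to impose the Markov condition, i.e.\ set this expression to zero, and solve. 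Provided the variances $P$ and $V$ are strictly positive, I may divide through by $\sqrt{PV}$ to obtain $\rho_2=\rho_1\rho_3$, which is exactly \eqref{eq: rho_2=rho_1rho_3}.

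I do not expect a genuine obstacle here, consistent with the lemma being a direct combination of standard facts; the computation is routine and the only point deserving care is the degenerate regime. If $Q=0$ then $Y$ is almost surely constant and $\Var(Y)^{-1}$ is undefined, while if $P=0$ or $V=0$ the corresponding variable is degenerate and the normalized coefficients $\rho_1,\rho_2,\rho_3$ are ill-posed; in these edge cases the identity is either vacuous or should be recovered by a continuity/limiting argument, and in the application the relevant variances are strictly positive, so no difficulty arises.
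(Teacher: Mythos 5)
Your proof is correct, but it takes a genuinely different computational route from the paper's. The paper stays inside information theory: it writes the Markov property as $I(X;Z|Y)=0$, expands this via the Gaussian differential-entropy formula into $\frac{1}{2}\log\brackets{\frac{\det(\Sigma_{X,Y})\det(\Sigma_{Y,Z})}{\sigma_Y^2\det(\Sigma_{X,Y,Z})}}$, and then shows by direct computation that the determinant identity $\det(\Sigma_{X,Y})\det(\Sigma_{Y,Z})-\sigma_Y^2\det(\Sigma_{X,Y,Z})=PQ^2V(\rho_1\rho_3-\rho_2)^2$ forces $\rho_2=\rho_1\rho_3$. You instead use the purely probabilistic characterization that, for jointly Gaussian variables, conditional independence given $Y$ is equivalent to the vanishing of the conditional cross-covariance, and you evaluate that single scalar via the Schur complement, obtaining $\Cov(X,Z|Y)=(\rho_2-\rho_1\rho_3)\sqrt{PV}$. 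Your route is more elementary (no differential entropy, no $3\times 3$ determinant) and the discrepancy $\rho_2-\rho_1\rho_3$ appears linearly rather than as a square, so the conclusion is immediate; it is also slightly more robust, since the paper's log-ratio expression implicitly requires $\det(\Sigma_{X,Y,Z})>0$ while your computation only needs $\Var(Y)=Q>0$ to be invertible, and you flag the degenerate cases $P=0$, $V=0$, $Q=0$ explicitly. What the paper's version buys in exchange is consistency of toolkit: the same entropy-to-determinant manipulations reappear in the proof of the main theorem (e.g., in the derivation of the information constraint), so the authors reuse one computational pattern throughout, whereas your argument imports a separate (if standard) fact about Gaussian conditional independence.
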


In other words, with the context of the Markov chain of jointly Gaussian $X -\!\!\!\!\minuso\!\!\!\!- Y -\!\!\!\!\minuso\!\!\!\!- Z$, if $X$ and $Y$ (or if $Z$ and $Y$) are uncorrelated (i.e., if $\rho_1=0$ or $\rho_3=0$), it follows that $X$ and $Z$ are also uncorrelated (i.e., $\rho_2=0$).

\begin{corollary}\label{cor: causal cost with feedback}
    The problem for causal encoder and non-causal decoder with channel feedback (i.e., channel output $Y_1$ is available to the first DM) has the same cost result \eqref{opt gaussian cost}.
\end{corollary}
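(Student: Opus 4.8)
The plan is to prove the two inequalities $S_{\mathsf{G}}^{\mathrm{fb}}(P)\leq S_{\mathsf{G}}(P)$ and $S_{\mathsf{G}}^{\mathrm{fb}}(P)\geq S_{\mathsf{G}}(P)$ separately, where $S_{\mathsf{G}}^{\mathrm{fb}}(P)$ denotes the optimal Gaussian estimation cost when the encoder at stage $t$ has access to the strictly causal feedback pair $(X_0^t, Y_1^{t-1})$. The first inequality is immediate: any control design without feedback is a valid design with feedback that simply discards $Y_1^{t-1}$, and it attains the same power cost $\mathbb{E}[U_1^2]=P$; hence the feedback optimization domain contains the no-feedback one and the infimum can only decrease. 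The entire content of the corollary therefore lies in the converse $S_{\mathsf{G}}^{\mathrm{fb}}(P)\geq S_{\mathsf{G}}(P)$, that is, in showing that feedback cannot lower the cost.

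For the converse I would establish the single-letter characterization of the achievable region for the feedback setting, paralleling Theorem \ref{wits main theorem}. Allowing $U_{1,t}$ to depend on $Y_1^{t-1}$ relaxes the two causal-encoding Markov chains in \eqref{markov result}. The crucial observation is that, via the recursion $Y_{1,\tau}=X_{0,\tau}+U_{1,\tau}+Z_{1,\tau}$, the feedback $Y_1^{t-1}$ is a deterministic function of $(X_0^{t-1}, Z_1^{t-1})$ and thus carries information only about the \emph{past} source and the \emph{past} noise. Because $X_0^n$ and $Z_1^n$ are i.i.d.\ and mutually independent, the current symbol $(X_{0,t},Z_{1,t})$ is independent of everything the encoder can learn from the feedback. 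I would use this independence exactly as in the classical argument that feedback does not enlarge the capacity of a memoryless channel, so that the multi-letter information terms reduce to the same per-symbol inequality $I(W_1,W_2;Y_1)-I(W_2;X_0|W_1)\geq 0$ as in \eqref{eq: info result}.

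Having argued that the feedback setting is governed by the same information constraint \eqref{eq: info result} and the same cost expressions \eqref{eq: cost result}, the Gaussian optimization becomes identical to the one solved in the proof of Theorem \ref{thm: best gaussian policy}, with Lemma \ref{lemma: markov-covariance} again pinning down the admissible covariance coefficients from the (possibly relaxed) Markov structure. This forces $S_{\mathsf{G}}^{\mathrm{fb}}(P)\geq S_{\mathsf{G}}(P)$, and combined with the trivial upper bound yields equality, hence the cost formula \eqref{opt gaussian cost}.

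I expect the main obstacle to be the converse single-letterization with feedback: one must verify that, even though the codeword variable $W_1$ may now be correlated with past channel outputs, the independence of the current source and noise symbols from the past prevents any genuine relaxation of the binding constraint. Concretely, the delicate step is to show that the cross-time correlations an adaptive encoder can inject through $Z_1^{t-1}$ only add power to $U_1$ without improving the non-causal decoder's estimate of the current $X_1$, so that no feedback-based Gaussian scheme can beat the time-sharing strategy of Theorem \ref{thm: best gaussian policy}.
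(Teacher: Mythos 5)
Your upper-bound direction (a no-feedback design is a feedback design that ignores $Y_1^{t-1}$) is fine, but your converse rests on a claim that is false and that the paper itself contradicts. You assert that a single-letterization of the feedback setting yields the \emph{same} per-symbol constraint $I(W_1,W_2;Y_1)-I(W_2;X_0|W_1)\geq 0$ as in \eqref{eq: info result}, by analogy with the classical fact that feedback does not increase the capacity of a memoryless channel. This is not a capacity problem, and that analogy breaks: the established characterization of the feedback setting (from \cite{zhao2024causal}, which the paper's proof invokes) is the constraint \eqref{eq: info result c-n w-f}, namely $I(W_1;Y_1)-I(U_2;X_0|W_1,Y_1)\geq 0$, in which the \emph{estimate} $U_2$ replaces the auxiliary variable $W_2$. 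Because of the Markov chain $X_0 -\!\!\!\!\minuso\!\!\!\!- (W_1,W_2,Y_1) -\!\!\!\!\minuso\!\!\!\!- U_2$, data processing gives $I(U_2;X_0|W_1,Y_1)\leq I(W_2;X_0|W_1,Y_1)$, so the feedback constraint is weaker (more permissive) than the no-feedback one, which by \eqref{eq: info constraint, w/o feedback} equals $I(W_1;Y_1)-I(W_2;X_0|W_1,Y_1)\geq 0$. The feedback feasible set is therefore a superset in general, and the paper's closing remark stresses that the five settings have \emph{different} single-letter regions which coincide only under the Gaussian restriction. A converse that recovers \eqref{eq: info result} for the feedback model would thus prove something stronger than what is true, and no refinement of the memoryless-feedback argument can deliver it.

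The step you are missing is the one that does all the work in the paper's proof, and it is specific to Gaussianity: for jointly Gaussian variables the MMSE estimator is affine, $U_2=\mathbb{E}\big[X_1\big|W_1,W_2,Y_1\big]=a\cdot W_1+b\cdot W_2+c\cdot Y_1$, so conditionally on $(W_1,Y_1)$ the estimate $U_2$ is an affine (generically invertible) function of $W_2$. This turns the data-processing inequality into the equality $I(U_2;X_0|W_1,Y_1)=I(W_2;X_0|W_1,Y_1)$, which collapses the feedback constraint \eqref{eq: info result c-n w-f} onto \eqref{eq: info constraint, w/o feedback}; only then does the optimization domain coincide with $\mathbb{P}(P)$ and Theorem \ref{thm: best gaussian policy} apply verbatim. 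In short, the corollary is not an instance of ``feedback is useless over memoryless channels''; it is a consequence of the linearity of Gaussian MMSE estimation, a point your outline never touches.
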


The cost region characterization of this setup is investigated in \cite[Sec. C]{zhao2024causal}. From Corollary \ref{cor: causal cost with feedback}, we get that having channel feedback information to assist communication does not contribute to any performance gain in the Gaussian case.

Next, we recall Witsenhausen's two-point strategy. It outperforms the best Gaussian cost \eqref{opt gaussian cost} for some values of $Q$ and $N$.
\begin{theorem}[\protect{\cite[Sec. 6]{witsenhausen1968}}, \protect{\cite[Prop. 11]{Treust2024power}}]
    For parameter $a\geq 0$, Witsenhausen's two-point strategy is given by
    \begin{align*}
        U_1 = a\cdot \text{sign}(X_0) - X_0.
    \end{align*}
    The power and estimation costs are given by
    \begin{align}
        P_2(a) &= Q + a\brackets{a - 2\sqrt{\frac{2Q}{\pi}}},\nonumber\\
        S_2(a) &= a^2\sqrt{\frac{2\pi}{N}}\phi\brackets{\frac{a}{\sqrt{N}}}\int \frac{\phi\brackets{\frac{y_1}{\sqrt{N}}}}{\cosh{(\frac{ay_1}{N})}}dy_1,\label{eq: two-point cost}
    \end{align}
    where $\phi(x) = \frac{1}{\sqrt{2\pi}}e^{-\frac{x^2}{2}}$ and the optimal receiver's strategy is given by $\mathbb E\big[X_1\big|Y_1 = y_1\big]=a\tanh{(\frac{ay_1}{N})}$.
\end{theorem}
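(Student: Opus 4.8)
The plan is to exploit the fact that Witsenhausen's two-point strategy collapses the interim state onto a binary alphabet, after which both costs reduce to one-dimensional Gaussian integrals. First I would substitute the strategy into the channel relation \eqref{X1 generation}: since $U_1 = a\,\mathrm{sign}(X_0) - X_0$, the interim state is $X_1 = X_0 + U_1 = a\,\mathrm{sign}(X_0)$, so $X_1$ takes only the values $\pm a$, each with probability $1/2$ by the symmetry of $X_0\sim\mathcal N(0,Q)$.

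For the power cost I would expand $U_1^2 = a^2 - 2a\,\mathrm{sign}(X_0)X_0 + X_0^2$ and take expectations. The only nontrivial term is $\mathbb E[\mathrm{sign}(X_0)X_0] = \mathbb E[|X_0|]$, which for a centered Gaussian equals $\sqrt{2Q/\pi}$. Combining with $\mathbb E[X_0^2]=Q$ yields $P_2(a) = Q + a\big(a - 2\sqrt{2Q/\pi}\big)$ directly.

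For the estimation cost I would first derive the \ac{mmse} receiver, which we already know is the conditional expectation. Since $X_1\in\{\pm a\}$ and $Y_1 = X_1 + Z_1$ with $Z_1\sim\mathcal N(0,N)$, Bayes' rule gives posterior weights proportional to $\exp(-(y_1\mp a)^2/2N)$; factoring out the common term $\exp(-(y_1^2+a^2)/2N)$ turns the posterior mean into $\mathbb E[X_1\mid Y_1=y_1] = a\tanh(ay_1/N)$, matching the stated receiver. The cost is then the average conditional variance $S_2(a) = \mathbb E\big[\mathrm{Var}(X_1\mid Y_1)\big]$, and since $X_1$ is binary with $X_1^2\equiv a^2$, we get $\mathrm{Var}(X_1\mid Y_1=y_1) = a^2\big(1-\tanh^2(ay_1/N)\big) = a^2\,\mathrm{sech}^2(ay_1/N)$.

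The key simplification, and the only place requiring care, comes when integrating this conditional variance against the marginal density of $Y_1$. That marginal is the equal mixture $p_{Y_1}(y_1) = \tfrac12\sum_{\pm}\tfrac{1}{\sqrt{2\pi N}}e^{-(y_1\mp a)^2/2N}$, and the same factorization as above rewrites it as $\tfrac{1}{\sqrt{2\pi N}}e^{-(y_1^2+a^2)/2N}\cosh(ay_1/N)$. Multiplying by $\mathrm{sech}^2(ay_1/N)$ cancels one factor of $\cosh$, leaving $a^2\,\tfrac{1}{\sqrt{2\pi N}}e^{-a^2/2N}\int \tfrac{e^{-y_1^2/2N}}{\cosh(ay_1/N)}\,dy_1$. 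Re-expressing the Gaussian kernels through $\phi(\cdot)$ then gives exactly \eqref{eq: two-point cost}. The main obstacle is therefore purely bookkeeping: ensuring the $\cosh$ arising from the $Y_1$-marginal correctly reduces the $\mathrm{sech}^2$ to the single inverse-$\cosh$ appearing in the claimed formula, rather than leaving a spurious $\cosh^2$ in the denominator.
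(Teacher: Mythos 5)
Your derivation is correct, and note that the paper does not prove this statement itself: it is recalled from the cited sources (Witsenhausen 1968, Sec.~6, and Le Treust--Oechtering 2024, Prop.~11), whose argument is exactly the one you give --- the interim state collapses to $X_1 = a\,\mathrm{sign}(X_0) \in \{\pm a\}$, the posterior mean becomes $a\tanh(ay_1/N)$ after factoring the common Gaussian term, and integrating the conditional variance $a^2\bigl(1-\tanh^2(ay_1/N)\bigr)$ against the mixture marginal cancels one $\cosh$ factor, leaving the single inverse-$\cosh$ integrand in \eqref{eq: two-point cost}. All three formulas ($P_2(a)$, the receiver, and $S_2(a)$) check out, including the bookkeeping step you flagged.
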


\section{Discussions}\label{sec: discussions}

    The numerical results of the best affine cost $S_\ell(P)$ in \eqref{eq: opt linear cost}, the optimal Gaussian cost $S_{\mathsf{G}}(P)$ in \eqref{opt gaussian cost}, Witsenhausen's two-point strategy cost $S_2(P)$ in \eqref{eq: two-point cost}, and the Dirty Paper Coding (DPC) based cost for cases where both \ac{dm}s are non-causal $S_{\mathsf{dpc}}(P)$ in \cite[App. D.1-D.7]{Grover2010Witsenhausen} and  \cite[Eq.(47)]{Treust2024power} are illustrated in Fig \ref{fig:mmse_curve}, for the parameters $(Q,N)=(0.8,0.1)$.
    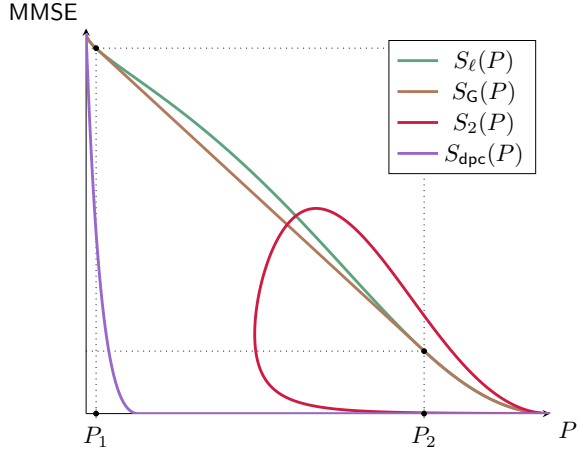
\begin{figure}[t]
        \centering
\definecolor{antiquefuchsia}{rgb}{0.57,0.36,0.51}
\definecolor{antiquebrass}{rgb}{0.7,0.48,0.36}
\definecolor{alizarin}{rgb}{0.82,0.1,0.26}
\definecolor{airforceblue}{rgb}{0.36,0.64,0.50}
\definecolor{amethyst}{rgb}{0.6,0.4,0.8}

\begin{tikzpicture}[scale = 0.90]
\begin{axis}[
    xlabel={$P$},
    ylabel={$\mathsf {MMSE}$},
    legend pos=north east,
    axis lines=middle, 
    xlabel style={at={(ticklabel* cs:1)}, anchor=north west},
    ylabel style={at={(ticklabel* cs:1)}, anchor=south east},
    xmin=0, xmax=0.8, 
    ymin=0, ymax=0.09, 
    xtick={0.0172,0.5828}, 
    ytick=\empty, 
    xticklabels={$P_1$, $P_2$}, 
]
\addplot[airforceblue, very thick] table [col sep=comma, x index=0, y index=1] {Tikz/SlP_data.csv};
\addlegendentry{$S_{\ell}(P)$}

\addplot[antiquebrass, very thick] table [col sep=comma, x index=0, y index=1] {Tikz/SgP_data.csv};
\addlegendentry{$S_{\mathsf {G}}(P)$}


\draw [dotted] (axis cs:0,0.0854) -- (axis cs:0.5828, 0.0854);
\draw [dotted] (axis cs:0,0.0146) -- (axis cs:0.5828, 0.0146);
\draw [dotted] (axis cs:0.0172, 0) -- (axis cs:0.0172, 0.0854);
\draw [dotted] (axis cs:0.5828, 0) -- (axis cs:0.5828, 0.0854);


\addplot[alizarin, very thick] table [col sep=comma, x index=0, y index=1] {Tikz/S2P_data.csv};
\addlegendentry{$S_2(P)$}

\addplot[amethyst, very thick] table [col sep=comma, x index=0, y index=1] {Tikz/SdpcP_data.csv};
\addlegendentry{$S_{\mathsf {dpc}}(P)$}

\addplot[only marks, mark=*, mark options={scale=0.5}, color=black] coordinates {(0.0172, 0) (0.5828, 0)(0.0172, 0.0854)(0.5828, 0.0146)};
\end{axis}
\end{tikzpicture}
        \caption{Comparison of the four cost functions $S_\ell(P)$, $S_{\mathsf{G}}(P)$, $S_2(P)$ and $S_{\mathsf{dpc}}(P)$. In this particular case, $S_2(P)$ outperforms $S_{\mathsf{G}}(P)$ and $S_\ell(P)$, and the cost induced by the non-causal strategy $S_{\mathsf{dpc}}(P)$ outperforms all other cost functions. }
        \label{fig:mmse_curve}
    \end{figure}


\begin{remark}Now, we discuss the optimal Gaussian cost for the case where both \ac{dm}s are causal.
\begin{itemize}
    \item The optimal Gaussian strategy discussed in \cite[App. B-B]{Treust2024power} is a single-letter (causal) approach, which provides a valid control strategy for a more restrictive setup assuming that the second \ac{dm} is also causal.  Therefore, \eqref{opt gaussian cost} is an upper bound of this cost. 
    \item From the genie-aided argument, a lower bound for the optimal cost is given by the optimal cost of a more superior system, such as the setup discussed in Corollary \ref{cor: causal cost with feedback}. Therefore, \eqref{opt gaussian cost} is a lower bound of the optimal cost of causal encoding and causal decoding Witsenhausen setup. 
\end{itemize}
Consequently, the optimal Gaussian cost for the causal encoding and decoding setup is the same as \eqref{opt gaussian cost}.
\end{remark}

    \begin{remark}
Our discussions above lead us to a remarkable conclusion: Even though the optimal achievable cost regions are different, the optimal Gaussian costs for the following five distinct configurations are the same:
    \begin{enumerate}
        \item Causal encoding and causal decoding.
        \item Causal encoding and non-causal decoding.
        \item Causal encoding and non-causal decoding with channel feedback.
        \item Non-causal encoding and causal decoding.
        \item Non-causal encoding and causal decoding with source feedforward.
    \end{enumerate}
    This equality is surprising since these settings all have different single-letter optimal regions, but these regions coincide if we restrict to Gaussian schemes. Therefore, block-coding gains can be obtained only if both \ac{dm}s are non-causal.

    
\end{remark}

\section*{Appendix}\label{sec: appendix}

\begin{proof}[Proof of the Main Result]


Without loss of generality, we consider the joint Gaussian random variables $(X_0, W_1,W_2, U_1)\sim\mathcal{N}(0, K)$ optimal for problem \eqref{eq: opt prob}, are centered with the covariance matrix 
\begin{flalign}
    K&= \begin{pmatrix}
        Q & \rho_1\sqrt{QV_1} & \rho_2\sqrt{QV_2} & \rho_3\sqrt{QP}\\
         \rho_1\sqrt{QV_1} & V_1 &\rho_4\sqrt{V_1V_2} & \rho_5\sqrt{V_1P}\\
         \rho_2\sqrt{QV_2}&\rho_4\sqrt{V_1V_2}&V_2&\rho_6\sqrt{V_2P} \\
         \rho_3\sqrt{QP} & \rho_5\sqrt{V_1P} & \rho_6\sqrt{V_2P} & P
    \end{pmatrix}.\label{eq: cov x_0, w_1, w_2, u_1}
\end{flalign}
Since $X_0\indep W_1$, $\rho_1 = 0$. Also, given the Markov chain $U_1 -\!\!\!\!\minuso\!\!\!\!- (X_0, W_1) -\!\!\!\!\minuso\!\!\!\!- W_2$, from Lemma \ref{lemma: markov-covariance}, we can obtain $\rho_6 =\rho_2\rho_3 + \rho_4\rho_5 $. Moreover, other active correlation coefficients $(\rho_2, \rho_3, \rho_4, \rho_5)\in[-1,1]^4$ are chosen such that 
\begin{align*}
   \det(K) =QV_1V_2P(-1 + \rho_2^2 + \rho_4^2)(-1 + \rho_3^2 + \rho_5^2)\geq 0.
\end{align*}

Given \eqref{eq: cov x_0, w_1, w_2, u_1} and \eqref{Y1 generation}, the covariance matrix $K_2$ of $(X_0,W_1,W_2,Y_1)$ could be easily computed, with a determinant given by
\begin{align*}
    \det(K_2) = QV_1V_2(-1 + \rho_2^2 + \rho_4^2)(P(-1 + \rho_3^2 + \rho_5^2) - N).
\end{align*}
The positive semi-definite property of $K_2$ must also be satisfied with properly chosen $(\rho_2, \rho_3, \rho_4, \rho_5)$.

We have the following lemma determining the explicit formulas of the information constraint \eqref{eq: info result} and the optimization object \eqref{eq: opt prob}.

\begin{lemma}\label{lemma, eqs to show}
    Assume $(X_0, W_1,W_2, U_1)\sim\mathcal{N}(0, K)$, then
    \begin{align}
        &I(W_1, W_2; Y_1) - I(W_2; X_0|W_1)\nonumber\\
        &= I(W_1; Y_1) - I(W_2; X_0|W_1, Y_1)\label{eq: info constraint, w/o feedback}\\
        &= \frac{1}{2}\log\brackets{\frac{T_1}{T_1 - T_2}},\label{eq: new info constraint, w/o feedback}
    \end{align}
    where the terms 
    \begin{align}
        &T_1 = (P+Q+N+2\rho_3\sqrt{QP})(-1 + \rho_2^2 + \rho_4^2),\nonumber\\
        &T_2 = N\rho_2^2 + P\rho_2^2(1-\rho_3^2) - P\rho_5^2(1-\rho_4^2).\nonumber
    \end{align}
    And the object to minimize is
    \begin{align}
        &\mathbb E\Big[\big(X_1 - \mathbb E\big[X_1\big|W_1,W_2,Y_1\big]\big)^2\Big]\nonumber\\
        &=\frac{N\cdot f_1(\rho_2,\rho_3,\rho_4,\rho_5)}{(1-\rho_4^2)\cdot N + f_1(\rho_2,\rho_3,\rho_4,\rho_5)},\label{eq: conditional variance}
    \end{align}
    where $f_1(\rho_2,\rho_3,\rho_4,\rho_5) = -P\rho_2^2\rho_3^2 - (Q+2\rho_3\sqrt{PQ})(-1+\rho_2^2+\rho_4^2) + P(1-\rho_4^2)(1-\rho_5^2)$.
\end{lemma}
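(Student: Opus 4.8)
The plan is to establish the three displayed equalities separately: the identity \eqref{eq: info constraint, w/o feedback} is purely information-theoretic, while \eqref{eq: new info constraint, w/o feedback} and \eqref{eq: conditional variance} are direct Gaussian computations built from the covariance matrix $K$ in \eqref{eq: cov x_0, w_1, w_2, u_1} together with the channel relation $Y_1 = X_0 + U_1 + Z_1$ with $Z_1 \sim \mathcal{N}(0,N)$ independent of $(X_0, W_1, W_2, U_1)$.

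For \eqref{eq: info constraint, w/o feedback} I would first apply the chain rule $I(W_1, W_2; Y_1) = I(W_1; Y_1) + I(W_2; Y_1 \mid W_1)$, reducing the claim to showing $I(W_2; Y_1 \mid W_1) - I(W_2; X_0 \mid W_1) = - I(W_2; X_0 \mid W_1, Y_1)$. Expanding $I(W_2; X_0, Y_1 \mid W_1)$ in the two possible orders gives $I(W_2; X_0 \mid W_1) + I(W_2; Y_1 \mid X_0, W_1) = I(W_2; Y_1 \mid W_1) + I(W_2; X_0 \mid W_1, Y_1)$. The factorization \eqref{eq: prob result} draws $W_2$ and $U_1$ through separate kernels from $(X_0, W_1)$, and $Y_1$ is a function of $(X_0, U_1, Z_1)$ only; hence $W_2$ and $Y_1$ are conditionally independent given $(X_0, W_1)$, so $I(W_2; Y_1 \mid X_0, W_1) = 0$. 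Substituting back yields \eqref{eq: info constraint, w/o feedback}.

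For the explicit value \eqref{eq: new info constraint, w/o feedback} I would use the Gaussian formula $I(A;B \mid C) = \frac{1}{2}\log \frac{\det\Sigma_{AC}\det\Sigma_{BC}}{\det\Sigma_{ABC}\det\Sigma_C}$. Applied to the right-hand side $I(W_1; Y_1) - I(W_2; X_0 \mid W_1, Y_1)$, the common factor $\det\Sigma_{W_1, Y_1}$ cancels and the difference collapses to $\frac{1}{2}\log \frac{\det\Sigma_{W_1}\det\Sigma_{Y_1}\det\Sigma_{W_2, X_0, W_1, Y_1}}{\det\Sigma_{W_2, W_1, Y_1}\det\Sigma_{X_0, W_1, Y_1}}$. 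Every entry needed here is read off from $K$ using $\rho_1 = 0$ and $\rho_6 = \rho_2\rho_3 + \rho_4\rho_5$, together with $\mathrm{Cov}(Y_1, \cdot) = \mathrm{Cov}(X_0 + U_1, \cdot)$ and $\mathrm{Var}(Y_1) = P + Q + N + 2\rho_3\sqrt{QP}$; note this last variance is exactly the first factor of $T_1$. It then remains to expand the four determinants and verify that the ratio equals $T_1/(T_1 - T_2)$.

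The cost \eqref{eq: conditional variance} is the conditional variance $\mathrm{Var}(X_1 \mid W_1, W_2, Y_1)$ with $X_1 = X_0 + U_1$, which for jointly Gaussian variables is the Schur complement $\det\Sigma_{X_1, W_1, W_2, Y_1} / \det\Sigma_{W_1, W_2, Y_1}$. I would assemble the cross-covariances of $X_1$ and $Y_1$ with $(W_1, W_2)$ from the entries above and simplify to the stated form. I expect the main obstacle to be purely bookkeeping: carrying $(\rho_2, \rho_3, \rho_4, \rho_5)$ and the auxiliary variances $(V_1, V_2)$ through the determinant expansions and checking that $V_1$ and $V_2$ cancel, so that the final expressions reduce to the compact rational forms in $T_1, T_2$ and $f_1$. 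Arranging the algebra so these cancellations are visible, rather than the individual expansions, is where care is required.
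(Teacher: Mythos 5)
Your proposal is correct and follows essentially the same route as the paper: the chain rule plus the Markov chain $Y_1 -\!\!\!\!\minuso\!\!\!\!- (X_0,W_1) -\!\!\!\!\minuso\!\!\!\!- W_2$ (i.e., $I(W_2;Y_1|X_0,W_1)=0$) to get \eqref{eq: info constraint, w/o feedback}, the Gaussian log-determinant formula with the cancelling $\det\Sigma_{W_1,Y_1}$ factor to get \eqref{eq: new info constraint, w/o feedback}, and the Schur-complement conditional variance for \eqref{eq: conditional variance}. The remaining determinant expansions you defer to bookkeeping are likewise left implicit in the paper's proof, so nothing essential is missing.
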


\begin{proof}[Proof of Lemma \ref{lemma, eqs to show}]
     \begin{align*}
    &\quad I(W_1, W_2; Y_1) - I(W_2; X_0|W_1)\\
    &= I(W_1; Y_1) + I(W_2; Y_1|W_1) - I(W_2; X_0|W_1)\\
    &\stackrel{\text{(a)}}{=} I(W_1; Y_1) + I(W_2; Y_1|W_1) - I(W_2; X_0, Y_1|W_1)\\
    &= I(W_1; Y_1) - I(W_2; X_0|W_1, Y_1)\\
    & = \frac{1}{2}\log\brackets{\frac{\sigma_{W_1}^2\cdot\sigma_{Y_1}^2\cdot\det{(K_2)}}{\det{(\Sigma_{W_1,W_2,Y_1})}\cdot\det{(\Sigma_{X_0,W_1,Y_1})}}}\\
    & \stackrel{}{=} \frac{1}{2}\log\brackets{\frac{T_1}{T_1 - T_2}},
\end{align*}
where (a) comes from the Markov chain $Y_1 -\!\!\!\!\minuso\!\!\!\!- (X_0, W_1) -\!\!\!\!\minuso\!\!\!\!- W_2$, and thus $I(W_2; Y_1|X_0, W_1)=0$. Additionally,
\begin{align}
    &\mathbb E\Big[\big(X_1 - \mathbb E\big[X_1\big|W_1,W_2,Y_1\big]\big)^2\Big]\nonumber\\
    &= \Var\brackets{X_1|W_1, W_2, Y_1}\nonumber\\
    &\stackrel{\text{(b)}}{=} \sigma_{X_1}^2 - \Sigma_{X_1W}^\top\Sigma_{WW}^{-1}\Sigma_{X_1W}\nonumber\\
    &= \frac{N\cdot f_1(\rho_2,\rho_3,\rho_4,\rho_5)}{(1-\rho_4^2)\cdot N + f_1(\rho_2,\rho_3,\rho_4,\rho_5)},\nonumber
\end{align}
where step (b) is obtained using the Schur complement. Here, $W = (W_1,W_2,Y_1)^\top$, $\Sigma_{X_1W} = (\sigma_{X_1,W_1}, \sigma_{X_1,W_2}, \sigma_{X_1,Y_1})^\top$, and $\Sigma_{WW}$ is the covariance matrix of $(W_1,W_2,Y_1)$.
\end{proof}

Thus, given the expression of \eqref{eq: new info constraint, w/o feedback}, we can obtain a new expression of the original information constraint
\begin{align*}
    \frac{1}{2}\log\brackets{\frac{T_1}{T_1 - T_2}}\geq 0\Leftrightarrow T_1\geq T_2\geq 0 \text{ or } T_1\leq T_2\leq 0.
\end{align*}

If $1-\rho_4^2\ = 0$, from \eqref{eq: conditional variance}, we can get that 
\begin{align*}
    \mathbb E\Big[\big(X_1 - \mathbb E\big[X_1\big|W_1,W_2,Y_1\big]\big)^2\Big]=N.
\end{align*} 

Next, we focus on the case if $1-\rho_4^2\ \neq 0$. In this case, \eqref{eq: conditional variance} is of the form 
\begin{align}
    \frac{N\cdot f(\rho_2,\rho_3,\rho_4,\rho_5)}{N + f(\rho_2,\rho_3,\rho_4,\rho_5)}, \label{eq: Nf/(N+f)}
\end{align} 
where $f(\rho_2,\rho_3,\rho_4,\rho_5) = f_1(\rho_2,\rho_3,\rho_4,\rho_5)/(1-\rho_4^2) =  \frac{ -P\rho_2^2\rho_3^2 - (Q+2\rho_3\sqrt{PQ})(-1+\rho_2^2+\rho_4^2) + P(1-\rho_4^2)(1-\rho_5^2)}{(1 -\rho_4^2)}$.

Note that, the function $x\mapsto \frac{N\cdot x}{N + x}$ is nonnegative and strictly increasing over the region $(-\infty, -N]\cup [0, \infty)$. Therefore, our goal of minimizing \eqref{eq: Nf/(N+f)} is now transformed to either minimizing the nonnegative object 
\begin{align*}
 f(\rho_2,\rho_3,\rho_4,\rho_5)\geq 0
\end{align*} or minimizing the negative object
\begin{align*}
   f(\rho_2,\rho_3,\rho_4,\rho_5)\leq -N
\end{align*} subject to the following constraints:
 \begin{align}
        &\text{1.  }\det(K)\geq 0 \Longrightarrow\nonumber\\
        &\quad QV_1V_2P(-1 + \rho_2^2 + \rho_4^2)(-1 + \rho_3^2 + \rho_5^2)
        \stackrel{\text{(A)}}{\geq} 0,\nonumber\\
        &\text{2.  }\det(K_2)\geq 0 \Longrightarrow\nonumber\\
         &\quad QV_1V_2(-1 + \rho_2^2 + \rho_4^2)(P(-1 + \rho_3^2 + \rho_5^2) - N)
        \stackrel{\text{(B)}}{\geq} 0,\nonumber\\
        &\text{3.  }T_1\geq T_2 \geq 0\Longrightarrow\nonumber\\&\quad(Q+P+N+2\rho_3\sqrt{QP})(-1  + \rho_2^2 + \rho_4^2) \nonumber\\
        &\quad\stackrel{\text{(C1)}}{\geq} N\rho_2^2 + P\rho_2^2(1-\rho_3^2) - P\rho_5^2(1-\rho_4^2)\stackrel{\text{(D1)}}{\geq}0,\nonumber\\
        &\quad\text{or,   }T_1\leq T_2 \leq 0\Longrightarrow\nonumber\\&\quad(Q+P+N+2\rho_3\sqrt{QP})(-1  + \rho_2^2 + \rho_4^2) \nonumber\\
        &\quad\stackrel{\text{(C2)}}{\leq}  N\rho_2^2+ P\rho_2^2(1-\rho_3^2) - P\rho_5^2(1-\rho_4^2)\stackrel{\text{(D2)}}{\leq}0.\nonumber
    \end{align}

To simplify the above constraints, we consider the following two distinct cases:

\textit{Case 1, }if $-1 + \rho_2^2 + \rho_4^2\geq 0$, constraints (A) and (B) together yield $-1 + \rho_3^2 + \rho_5^2\geq \frac{N}{P}$. Moreover, constraint (C1) gives us $f(\rho_2,\rho_3,\rho_4,\rho_5) \leq -N$. In this case, our optimization problem boils down to minimizing
\begin{align}
    f(\rho_2,\rho_3,\rho_4,\rho_5)\leq -N,\label{eq: opt object, case 1}
\end{align}
subject to 
\begin{align}
    &\text{1.  }-1 + \rho_2^2 + \rho_4^2\geq 0,\label{eq: case 1, constraint 1}\\
    &\text{2.  }-1 + \rho_3^2 + \rho_5^2\geq \frac{N}{P},\label{eq: case 1, constraint 2}\\
    &\text{3.  } N\rho_2^2 + P\rho_2^2(1-\rho_3^2) - P\rho_5^2(1-\rho_4^2)\geq0.\label{eq: case 1, constraint 3}
\end{align}
Notice that $f(\rho_2,\rho_3,\rho_4,\rho_5)$ is decreasing function of $\rho_5^2$. From \eqref{eq: case 1, constraint 3}, we get that $\rho_5^2 \leq \frac{N\rho_2^2 + P\rho_2^2(1-\rho_3^2)}{P(1-\rho_4^2)}$. Therefore, the optimizer is given by $(\rho_5^*)^2 = \frac{N\rho_2^2 + P\rho_2^2(1-\rho_3^2)}{P(1-\rho_4^2)}$. By plugging $(\rho_5^*)^2$ into \eqref{eq: opt object, case 1}, we obtain that
\begin{align*}
    &f(\rho_2,\rho_3,\rho_4,\rho_5^*) \\
    &= \frac{-(-1+\rho_2^2+\rho_4^2)(Q+P+2\rho_3\sqrt{PQ}) - N\rho_2^2}{1-\rho_4^2}.
\end{align*}
Since
\begin{align*}
    \frac{\partial f(\rho_2,\rho_3,\rho_4,\rho_5^*)}{\partial \rho_4^2} \leq 0,
\end{align*}
we know that $f$ decreases to $-\infty$ as $\rho_4^2$ approaches $1$. Moreover, since $\mathbb E\big[\big(X_1 - \mathbb E\big[X_1\big|W_1,W_2,Y_1\big]\big)^2\big]$  is continuous and converges to $N$ when $\rho_4^2\rightarrow 1$, in this case, the minimal value of $N$ is obtained at the boundary $\rho_4^2 = 1$.

\textit{Case 2, }If $-1 + \rho_2^2 + \rho_4^2\leq 0$, conditions (A) and (B) together give us $-1 + \rho_3^2 + \rho_5^2\leq 0$, and (C2) gives us $f(\rho_2,\rho_3,\rho_4,\rho_5) \geq -N$ (but only $f\geq 0$ contributes to a nonnegative estimation cost). Therefore, in this case, our optimization problem boils down to minimizing 
\begin{align}
    f(\rho_2,\rho_3,\rho_4,\rho_5) \geq0,\label{eq: opt object, case 2}
\end{align}
subject to
\begin{align}
        &\text{1.  }-1 + \rho_2^2 + \rho_4^2\leq 0,\label{eq: case 2, constraint 1}\\
        &\text{2.  }-1 + \rho_3^2 + \rho_5^2\leq 0,\label{eq: case 2, constraint 2}\\
        &\text{3.  }N\rho_2^2+ P\rho_2^2(1-\rho_3^2) - P\rho_5^2(1-\rho_4^2)\leq 0. \label{eq: case 2, constraint 3}
\end{align}
Since $f(\rho_2,\rho_3,\rho_4,\rho_5)$ is reduced especially when $\rho_5^2$ is increased, therefore, from \eqref{eq: case 2, constraint 2}, we get that $(\rho_5^*)^2 = 1-\rho_3^2$. By replacing $(\rho_5^*)^2$ into \eqref{eq: opt object, case 2}, we get that
\begin{align}    f(\rho_2,\rho_3,\rho_4,\rho_5^*) =\frac{ (1 - \rho_2^2 - \rho_4^2)(\sqrt{P}\rho_3 + \sqrt{Q})^2 }{(1 -\rho_4^2)}.\label{eq: opt object 2}
\end{align}
Therefore, when $P\geq Q$, taking $\rho_3^* = -\sqrt{\frac{Q}{P}}$ and any $\rho_2$, $\rho_4$ satisfy the constraints results in the optimal value of  $f(\rho_2,\rho_3^*,\rho_4,\rho_5^*) = 0$. In this case, $\mathbb E\big[\big(X_1 - \mathbb E\big[X_1\big|W_1,W_2,Y_1\big]\big)^2\big] = 0$.

When $P<Q$, $f(\rho_2,\rho_3,\rho_4,\rho_5^*)$ is a decreasing function of $\rho_2^2$. The constraint \eqref{eq: case 2, constraint 3} gives us the optimal value of $\rho_2^2$:
\begin{equation}
    (\rho_2^*)^2 = \frac{P(1-\rho_3^2)(1-\rho_4^2)}{N+P(1-\rho_3^2)}.\label{opt rho_2^2}
\end{equation}
Plugging the $(\rho_2^*)^2$ in \eqref{opt rho_2^2} into 
\eqref{eq: opt object 2}, we have
\begin{align*}
    f(\rho_2^*,\rho_3,\rho_4,\rho_5^*)  = \frac{N (\sqrt{P}\rho_3 + \sqrt{Q})^2}{N+P(1-\rho_3^2)}.
\end{align*}
Then, taking $\frac{\partial f}{\partial \rho_3} =0$ gives us the optimum 
\begin{align*}
    \rho_3^* = -\frac{P+N}{\sqrt{QP}},
\end{align*}
which is valid only if the following condition holds
\begin{align}
    (\rho_3^*)^2 = \frac{(P+N)^2}{QP}\leq 1
    \Longrightarrow \quad\left\{
       \begin{aligned}
           & Q > 4N,\\
           & P\in\sbrackets{P_1, P_2},
       \end{aligned}
       \right. \label{eq: opt condition, case 2}
\end{align}
where $P_1 = \frac{1}{2}\brackets{Q-2N-\sqrt{Q^2-4QN}}$, $P_2 = \frac{1}{2}\brackets{Q-2N+\sqrt{Q^2-4QN}}$.
In this case, $(\rho_5^*)^2 = \frac{QP - (P+N)^2}{QP}$, and $f(\rho_2^*,\rho_3^*,\rho_4^*,\rho_5^*) = \frac{N(Q - N - P)}{N+P}$, which results in the estimation cost of
\begin{align*}
    \mathbb E\Big[\big(X_1 - \mathbb E\big[X_1\big|W_1,W_2,Y_1\big]\big)^2\Big] = \frac{N(Q-N-P)}{Q}.
\end{align*}

In the case when the condition \eqref{eq: opt condition, case 2} is unmet, we always have $-\frac{P+N}{\sqrt{QP}} <-1$. Since $\frac{\partial f}{\partial \rho_3}>0$, function $f$ increases when $\rho_3\in[-1,1]$ increases. Therefore, the minimal value of $f$ achieves at the left boundary $\rho_3^* = -1$, which gives us $\rho_2^* = \rho_5^* = 0$ and $f(\rho_2^*,\rho_3^*,\rho_4^*,\rho_5^*) = (\sqrt{Q} - \sqrt{P})^2$. Hence, 
\begin{align*}
    \mathbb E\Big[(X_1 - \mathbb E\big[X_1\big|W_1,W_2,Y_1\big]\big)^2\Big] = \frac{N\cdot(\sqrt{Q} - \sqrt{P})^2}{N+(\sqrt{Q} - \sqrt{P})^2}.
\end{align*}

Obviously, the mimimal estimation cost value $N$ from case 1 is always larger than the cost derived in case 2. Therefore, summarizing our above analysis, the optimal Gaussian cost $S_{\mathsf {G}}(P)$ is given by \eqref{opt gaussian cost}.
\end{proof}

\begin{proof}[Proof of Lemma \ref{lemma: markov-covariance}]
From the Markov chain, we have that
\begin{align*}
    0&=I(X;Z|Y)\\
    &= H(X, Y) + H(Y,Z) - H(Y) - H(X,Y,Z)\\
    &= \frac{1}{2}\log\brackets{\frac{\det(\Sigma_{X,Y})\cdot\det(\Sigma_{Y,Z})}{\sigma_Y^2\cdot\det(\Sigma_{X,Y,Z})}},
\end{align*}
where all the information of the last step can be obtained from the covariance matrix \eqref{eq: cov xyz}. Therefore,
\begin{align*}
&0=\det(\Sigma_{X,Y})\cdot\det(\Sigma_{Y,Z}) - \sigma_Y^2\cdot\det(\Sigma_{X,Y,Z})\\
&= PQ^2V(\rho_1\rho_3 - \rho_2)^2,
\end{align*}
which implies the result \eqref{eq: rho_2=rho_1rho_3}.
\end{proof}

\begin{proof}[Proof of Corollary \ref{cor: causal cost with feedback}]
According to \cite{zhao2024causal}, the information constraint for Witsenhausen counterexample with causal encoder and non-causal decoder with channel feedback $Y_1$ is
\begin{equation}
    I(W_1; Y_1) - I(U_2; X_0|W_1, Y_1)\geq 0. \label{eq: info result c-n w-f}
\end{equation}
Moreover, in the Gaussian settings, the MMSE estimator of $X_1$ can be represented as $U_2 = \mathbb E\big[X_1\big|W_1,W_2,Y_1\big]=a\cdot W_1 + b\cdot W_2+c\cdot Y_1$, with some constants $a,b,c\in\mathbb R$. Hence, the information constraint \eqref{eq: info result c-n w-f} can be rewritten as
    \begin{align*}
        &I(W_1; Y_1) - I(U_2; X_0|W_1, Y_1)\\
        &=I(W_1; Y_1) - I(a\cdot W_1 + b\cdot W_2+c\cdot Y_1; X_0|W_1, Y_1)\\
        &=I(W_1; Y_1) - I(W_2; X_0|W_1, Y_1),
    \end{align*}
which recovers the information constraint for the framework without channel feedback \eqref{eq: info constraint, w/o feedback}. Thus, the optimization domain for this optimization problem is exactly the same as the one without channel feedback, i.e., \eqref{eq: opt domain w/o feedback}.
\end{proof}

\bibliographystyle{ieeetr}
\bibliography{bibliography/IEEEabrv,main}

\end{document}